\newcommand\myshade{85}
\colorlet{mylinkcolor}{red}
\colorlet{mycitecolor}{blue}
\colorlet{myurlcolor}{Aquamarine}
\renewcommand\thefootnote{\@fnsymbol\c@footnote}%
\newcommand\blfootnote[1]{%
  \begingroup
  \renewcommand\thefootnote{}\footnote{#1}%
  \addtocounter{footnote}{-2}%
  \endgroup
}
\providecommand{\keywords}[1]{\noindent\textbf{Keywords:} #1}
\newcommand{\RR}{\mathbb{R}}
\newcommand{\ra}{\rightarrow}
\newcommand{\norm}[1]{\left\|#1\right\|}
\newcommand{\half}{{\scaleobj{.8}{\frac{1}{2}}}}
\newcommand{\eg}{{\it e.g.}}
\newcommand{\ie}{{\it i.e.}}
\DeclareMathOperator{\Tr}{Tr}
\DeclareMathOperator{\Diag}{Diag}
\DeclareMathOperator{\Rank}{Rank}
\DeclareMathOperator{\sgn}{sgn}
\newtheorem{lem}{Lemma}
\newtheorem{thm}{Theorem}
\newtheorem{coro}{Corollary}
\newtheorem{prop}{Proposition}
\begin{document}
\title{A Matrix Generalization of the Hardy-Littlewood-P\'olya Rearrangement Inequality and Its Applications\blfootnote{Communicated by Defeng Sun.}}
\author{Man-Chung Yue\thanks{Musketeers Foundation Institute of Data Science and Department of Industrial and Manufacturing Systems Engineering, The University of Hong Kong, Hong Kong, China. E--mail: {\tt mcyue@hku.hk}}}
\date{\today}
\maketitle

\begin{abstract} 
By analyzing an optimization problem over orthogonal matrices, we prove a generalization of the Hardy-Littlewood-P\'olya rearrangement inequality to positive definite matrices. The inequality is then extended to rectangular matrices. Using our main results, we derive new inequalities for several distance-like functions encountered in various signal processing or machine learning applications.
\end{abstract}

\bigskip
\keywords{Matrix Rearrangement Inequality, Matrix Perturbation, Commutation Principle, Spectral Functions}

\section{Introduction}
The well-known Hardy-Littlewood-P\'olya rearrangement inequality \cite{hardy1952inequalities} states that for any vectors $u, v\in \mathbb{R}^n$,
\begin{equation}\label{ineq:HLP}
\sum_{i=1}^n u^\downarrow_i v^\uparrow_i \le \sum_{i=1}^n u_i v_i\le   \sum_{i=1}^n u^\downarrow_i v^\downarrow_i ,
\end{equation}
where $u^\downarrow$ and $v^\downarrow$ ($u^\uparrow$ and $v^\uparrow$) are the vectors with entries of $u$ and $v$ sorted in descending (ascending) order, respectively.
For positive vectors, a generalization of the rearrangement inequality~\eqref{ineq:HLP} is obtained in \cite{london1970rearrangement}, see also \cite[Example 3]{vince1990rearrangement}.
\begin{thm}[London~{\cite[Theorem 2]{london1970rearrangement}}]
\label{thm:London}
Let $u \in \mathbb{R}_{++}^n$,  $v\in \mathbb{R}_+^n$ and  $f : \mathbb{R}_+ \ra \mathbb{R}$ be any convex function such that $f(s) \ge f(0)$ for any $s \ge 0$. Then,
\begin{equation*}\label{ineq:London}
\sum_{i = 1}^n f (u_i^\downarrow v_i^\uparrow ) \le \sum_{i = 1}^n f \left(u_i v_i \right) \le \sum_{i = 1}^n f (u_i^\downarrow v_i^\downarrow ).
\end{equation*}
\end{thm}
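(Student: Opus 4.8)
My plan is to reduce the statement to a two-point ``exchange'' inequality for $f$ and then conclude by a sorting (bubble-sort) argument. First note that applying a common permutation to the entries of $u$ and $v$ leaves all three sums in the statement unchanged, so I may assume $u=u^\downarrow$, \ie\ $u_1\ge u_2\ge\cdots\ge u_n>0$. It then suffices to prove that, over all rearrangements $w$ of the vector $v$, the quantity $\sum_{i=1}^n f(u_i w_i)$ attains its maximum at $w=v^\downarrow$ and its minimum at $w=v^\uparrow$; the theorem follows by taking $w$ to be the rearrangement of $v$ induced by the permutation used to sort $u$.

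The crux is the following two-point inequality: for all scalars $0\le q\le p$ and $0\le a\le b$,
\begin{equation*}
f(pa)+f(qb)\ \le\ f(pb)+f(qa).
\end{equation*}
I would establish it as follows. Convexity of $f$ together with $f(s)\ge f(0)$ forces $f$ to be non-decreasing on $\mathbb{R}_+$ (otherwise the right derivative $f'_+$ would be negative near $0$ and $f$ would drop below $f(0)$), and then $s\mapsto s f'_+(s)$ is non-decreasing on $\mathbb{R}_+$, being a product of two non-negative non-decreasing functions. The boundary cases $q=0$ or $a=0$ are then immediate from monotonicity of $f$, and for $p\ge q>0$, $b\ge a>0$ one writes, using absolute continuity of convex functions,
\begin{equation*}
\bigl(f(pb)+f(qa)\bigr)-\bigl(f(pa)+f(qb)\bigr)=\int_a^b\bigl(pf'_+(pt)-qf'_+(qt)\bigr)\,dt\ \ge\ 0,
\end{equation*}
since for $t>0$ we have $pf'_+(pt)=\tfrac1t(pt)f'_+(pt)\ge\tfrac1t(qt)f'_+(qt)=qf'_+(qt)$. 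A slicker alternative: $f$ convex and non-decreasing implies $g:=f\circ\exp$ is convex on $\mathbb{R}$; with $p=e^{x}\ge e^{x'}=q$ and $a=e^{y}\le e^{y'}=b$ the inequality reads $g(x+y)+g(x'+y')\le g(x+y')+g(x'+y)$, which holds because the pair $(x+y,\,x'+y')$ is majorized by $(x+y',\,x'+y)$ and $g$ is convex, the degenerate cases with a vanishing entry again being handled by monotonicity.

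With this inequality at hand the proof finishes quickly. Given any rearrangement $w$ of $v$, whenever there are indices $i<j$ with $w_i<w_j$ one has $u_i\ge u_j$, and swapping $w_i,w_j$ does not decrease $\sum_k f(u_k w_k)$ by the two-point inequality; repeatedly removing such inversions turns $w$ into $v^\downarrow$ without ever decreasing the sum, so $\sum_k f(u_k w_k)\le\sum_k f(u_k v_k^\downarrow)$. Running the analogous procedure that drives $w$ toward $v^\uparrow$ (each swap now not increasing the sum) yields $\sum_k f(u_k w_k)\ge\sum_k f(u_k v_k^\uparrow)$, completing the argument. I expect the two-point inequality to be the only genuine difficulty: the key realizations are that London's hypotheses already force $f$ to be non-decreasing and hence $s\mapsto sf'(s)$ to be non-decreasing on $\mathbb{R}_+$ --- exactly the monotonicity condition that reappears in the matrix setting of this paper --- and that one must treat separately the boundary cases where entries of $u$ or $v$ vanish and the case of non-differentiable $f$ (work with $f'_+$, or approximate $f$ from within by smooth convex functions). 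The reduction and the sorting step are routine.
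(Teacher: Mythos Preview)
The paper does not actually prove Theorem~\ref{thm:London}; it is quoted as a known result from London's original paper. That said, the paper does recover the differentiable case of this theorem via the combination of the argument in the introduction (showing that convexity of $f$ together with $f(s)\ge f(0)$ implies $f'(s)\ge 0$ and hence $s\mapsto sf'(s)$ is non-decreasing) and Lemma~\ref{lem:rearrange} (which reduces the rearrangement inequality to the two-point exchange inequality $f(ac)+f(bd)\ge f(ad)+f(bc)$ and proves the latter by noting that $g(t)=f(tc)-f(td)$ satisfies $g'(t)=\tfrac{1}{t}\bigl(tcf'(tc)-tdf'(td)\bigr)\ge 0$). Your integral computation $\int_a^b\bigl(pf'_+(pt)-qf'_+(qt)\bigr)\,dt\ge 0$ is exactly this argument in integrated form, and the bubble-sort reduction is the standard step the paper leaves implicit.

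Your write-up is in fact a bit more complete than what the paper offers: you address non-differentiable $f$ by working with the right derivative $f'_+$, and you separately handle the boundary cases with vanishing entries, which is needed since the hypothesis allows $v\in\mathbb{R}_+^n$ rather than $\mathbb{R}_{++}^n$ (the paper's Lemma~\ref{lem:rearrange} restricts to strictly positive vectors). The alternative route via convexity of $g=f\circ\exp$ and majorization is a clean observation the paper does not make.
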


There are also various generalizations of inequality~\eqref{ineq:HLP} to the matrix setting, where the entries of vectors are replaced by the eigenvalues or singular values of matrices. One such example is the following result. To state it, we denote the $i$-th largest eigenvalue by $\lambda_i(\cdot)$.
\begin{thm}[{Carlen and Lieb~\cite[Theorems 3.1-3.2]{carlen2006some}\footnote{In \cite[Theorems 3.1-3.2]{carlen2006some}, the more general inequalities
\begin{equation*}
\sum_{i = 1}^n \lambda_i^q (A) \lambda_{n - i +1}^{p+q} (B) \le \sum_{i=1}^n \lambda_i \big( B^p (B^{\half} A B^{\half})^q \big) 
\quad
\text{and} 
\quad
\sum_{i=1}^n \lambda_i \big( B^p (B^{\half} A B^{\half})^q \big) \le \sum_{i=1}^n \lambda_i^q (A) \lambda_i^{p+q} (B)
\end{equation*}
are obtained for $p \ge 0$ and under the same conditions on $A$, $B$, and $q$. However, as shown in the proof in~\cite{carlen2006some}, the first inequality can be reduced to the case of $p=0$. 
}}]
\label{thm:Carlen_Lieb}
Let $A,B\in \mathbb{R}^{n\times n}$ be positive semidefinite matrices and $q \ge 1$. Then, it holds that
\begin{equation*}
\sum_{i = 1}^n \lambda_i^q (A) \lambda_{n - i +1}^q (B) \le \sum_{i = 1}^n \lambda_i^q (B^{\half} A B^{\half})  .
\end{equation*}
If $q \ge 1$ is an integer, it also holds that
\begin{equation*}
\sum_{i=1}^n \lambda_i^q (B^{\half} A B^{\half}) \le \sum_{i=1}^n \lambda_i^q (A) \lambda_i^q (B).
\end{equation*} 
\end{thm}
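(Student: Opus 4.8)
The plan is to derive both inequalities from two classical log‑majorization relations for the singular values of a product of matrices, together with the Hardy--Littlewood--P\'olya (Karamata) majorization principle. First I would dispose of the degenerate cases by continuity: replacing $A$ by $A+\varepsilon I$ and $B$ by $B+\varepsilon I$ and letting $\varepsilon\downarrow 0$, I may assume $A,B\succ 0$. Write $\alpha_i=\lambda_i(A)$ and $\beta_i=\lambda_i(B)$ in non‑increasing order and set $M:=A^{1/2}B^{1/2}$, so that $B^{1/2}AB^{1/2}=M^\top M$ and hence $\lambda_i(B^{1/2}AB^{1/2})=\sigma_i(M)^2$, while $\alpha_i=\sigma_i(A^{1/2})^2$ and $\beta_i=\sigma_i(B^{1/2})^2$. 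The theorem then becomes the single chain
\begin{equation*}
\sum_{i=1}^n\bigl(\sigma_i(A^{1/2})\,\sigma_{n-i+1}(B^{1/2})\bigr)^{2q}\;\le\;\sum_{i=1}^n\sigma_i(M)^{2q}\;\le\;\sum_{i=1}^n\bigl(\sigma_i(A^{1/2})\,\sigma_i(B^{1/2})\bigr)^{2q}.
\end{equation*}

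The key inputs, for arbitrary square matrices $X,Y$ (to be applied with $X=A^{1/2}$, $Y=B^{1/2}$), are: (i) Horn's inequality $\prod_{i\le k}\sigma_i(XY)\le\prod_{i\le k}\sigma_i(X)\,\sigma_i(Y)$ for every $k$, with equality at $k=n$; and (ii) its dual, which bounds the product of the $k$ largest of the numbers $\sigma_i(X)\,\sigma_{n-i+1}(Y)$, $i=1,\dots,n$, by $\prod_{i\le k}\sigma_i(XY)$, again with equality at $k=n$. Taking logarithms, (i) says that $\bigl(\log\sigma_i(M)\bigr)_i$ is majorized by $\bigl(\log\sigma_i(A^{1/2})+\log\sigma_i(B^{1/2})\bigr)_i$, and (ii) says that $\bigl(\log\sigma_i(M)\bigr)_i$ majorizes $\bigl(\log\sigma_i(A^{1/2})+\log\sigma_{n-i+1}(B^{1/2})\bigr)_i$ (all three vectors having the same sum $\log\det M$). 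Since $t\mapsto e^{2qt}$ is convex on $\RR$ for every $q\neq 0$, the Hardy--Littlewood--P\'olya inequality converts each majorization into the corresponding comparison of sums of exponentials, which upon exponentiating is precisely one of the two bounds above; in fact this argument yields the statement for all $q>0$, so the integrality hypothesis is not needed. (Alternatively, the upper bound follows from the Araki--Lieb--Thirring inequality $\Tr\bigl[(B^{1/2}AB^{1/2})^q\bigr]\le\Tr[A^qB^q]$, valid for all $q\ge 1$, together with von Neumann's trace inequality $\Tr[A^qB^q]\le\sum_i\lambda_i(A)^q\lambda_i(B)^q$, bypassing exterior powers entirely.)

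The only genuinely non‑routine ingredient is the dual inequality (ii), needed for the lower bound. I would derive it from the single‑term super‑multiplicative bound $\sigma_1(XY)\ge\sigma_r(X)\,\sigma_s(Y)$ whenever $r+s\le n+1$, applied to the $k$‑th compound (exterior power) matrices $\Lambda^kX$ and $\Lambda^kY$, recalling that $\sigma_1(\Lambda^kX\cdot\Lambda^kY)=\sigma_1(\Lambda^k(XY))=\prod_{i\le k}\sigma_i(XY)$. The hard part will be the combinatorial bookkeeping: for a fixed $k$‑subset $S$ one must verify that the rank of $\prod_{i\in S}\sigma_i(X)$ in the non‑increasing list of all $k$‑fold products of $\{\sigma_j(X)\}$, plus the rank of the complementary product $\prod_{i\in S}\sigma_{n-i+1}(Y)$ in the analogous list for $Y$, never exceeds $\binom{n}{k}+1$, so that the super‑multiplicative bound on $\Lambda^kX\cdot\Lambda^kY$ applies to exactly that pair of compound‑matrix indices; maximizing over $S$ then gives the required bound. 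With (i), (ii), the spectral reduction, and the Karamata step in hand, assembling the proof is immediate.
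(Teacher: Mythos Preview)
Your upper-bound argument is fine: Horn's log-majorization together with the convexity of $t\mapsto e^{2qt}$ (or the Araki--Lieb--Thirring route you mention) gives the second inequality cleanly, and indeed for all $q>0$.

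The lower bound, however, has a genuine gap. First a sign slip: the single-term bound must read $\sigma_1(XY)\ge\sigma_r(X)\sigma_s(Y)$ whenever $r+s\ge n+1$, not $\le$; with $r=s=1$ your version would assert $\sigma_1(XY)\ge\sigma_1(X)\sigma_1(Y)$, which is false. More seriously, even after correcting the sign, the ``combinatorial bookkeeping'' cannot succeed. Take $n=4$, $k=2$, $S=\{1,4\}$, so that $\{n-i+1:i\in S\}=\{1,4\}$ as well. If $\sigma(X)=(t^3,1,t^{-1},t^{-2})$ with $t>1$, then among the six $2$-fold products the value $\sigma_1(X)\sigma_4(X)=t$ has rank exactly $3$; choosing $\sigma(Y)$ identically gives rank $3$ for the $Y$-product too, so $r+s=6<\binom{4}{2}+1=7$. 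No admissible pair of compound-matrix indices covers this case, so the exterior-power argument does not deliver the dual inequality you need. In fact the ranks depend on the actual numerical values of the $\sigma_i$, not just on $S$, so there is no purely combinatorial statement of the form you propose.

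For the theorem \emph{as stated} you do not need log-majorization at all: the linear weak majorization $\sigma^\downarrow(A)\circ\sigma^\uparrow(B)\prec_w\sigma(AB)$ is a known result (the paper cites it from \cite{marshall2010inequalities}), and since $t\mapsto t^q$ is convex and increasing on $\RR_+$ for $q\ge 1$, the Hardy--Littlewood--P\'olya principle applied to that weak majorization already yields the lower bound. This salvages your overall plan at the cost of the claimed extension to $0<q<1$. The paper itself takes a completely different route: it obtains Theorem~\ref{thm:Carlen_Lieb} as the special case $f(s)=s^q$ of Theorem~\ref{thm:matrix_rearrange}, whose proof optimizes $U\mapsto S_f(\Sigma_AU\Sigma_BU^\top)$ over $\mathbb{O}_n$ and uses a first-order perturbation formula to force the extremizer to be a permutation matrix --- an argument that works for all $q\in\RR$ and for arbitrary square matrices.
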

The Hardy-Littlewood-P\'olya rearrangement inequality~\eqref{ineq:HLP} and its generalizations are useful tools in mathematical analysis and have found many applications in both pure and applied mathematics. They have been utilized in the studies of the geometry of Banach spaces~\cite{tomczak1974moduli,carlen2006some}, quantum entanglement~\cite{augusiak2008beyond}, covariance matrix estimation~\cite{rychener2024geometric} and  wireless communication~\cite{jorswieck2004performance,dorpinghaus2015log}, to name a few.

Our main contribution is the following matrix rearrangement inequality that generalizes both Theorem~\ref{thm:London} (up to differentiability requirement) and Theorem~\ref{thm:Carlen_Lieb}.

\begin{thm}
\label{thm:matrix_rearrange}
Let $f:\mathbb{R}_{++}\ra \mathbb{R}$ be a differentiable function such that $s\mapsto sf'(s)$ is monotonically increasing on $\RR_{++}$. Then, for any positive definite matrices $A,B\in \mathbb{R}^{n\times n}$,
\begin{equation*}
\sum_{i=1}^n f\left(\lambda_i (A)\lambda_{n-i+1} (B)\right) \le \sum_{i = 1}^n f\big( \lambda_i( B^{\half} A B^{\half} )\big) \le \sum_{i=1}^n f\left(\lambda_i (A)\lambda_i (B)\right).
\end{equation*}
If $f$ is additionally defined and right-continuous at $0$, then the inequality holds for any positive semidefinite matrices $A,B\in \mathbb{R}^{n\times n}$.
\end{thm}

The proof of Theorem~\ref{thm:matrix_rearrange} is based on the analysis of a certain optimization problem over orthogonal matrices and reveals that the matrices $A$ and $B$ commute at optimality. Specifically, note that the function $f$ induces a spectral function on the space of positive definite matrices, which defines $f(X) = U\Diag(f(\lambda_1(X)), \dots, f(\lambda_n(X)))V^\top$ for any positive definite matrix $X\in\mathbb{R}^{n\times n}$, where $U\Diag(\lambda_1(X),\dots, \lambda_n(X))V^\top$ is an eigenvalue decomposition of $X$ and $\Diag(\lambda_1(X),\dots, \lambda_n(X))$ is the diagonal matrix containing the eigenvalues of $X$ on its diagonal  sorted in descending order. Therefore, the middle sum can be written as $\Tr( f(B^{\half} A B^{\half}) )$, and Theorem~\ref{thm:matrix_rearrange} can be seen as a commutation principle for the function $X\mapsto \Tr(f(X))$ in the sense of \cite[Lemma~4]{iusem2007angular}, see also \cite[Theorem~7]{ramirez2013commutation} for the generalization of \cite[Lemma~4]{iusem2007angular} to continuously differentiable matrix functions. Nevertheless, \cite[Theorem~7]{ramirez2013commutation} is not directly applicable to our situation as the function $X\mapsto \Tr(f(X))$ is not continuously differentiable in general. Moreover, our proof is different from that of \cite[Theorem~7]{ramirez2013commutation} and highlights the importance of the monotonicity of $s\mapsto s f'(s)$ and positive definiteness of $A$ and $B$.

Theorem~\ref{thm:matrix_rearrange} does not only generalize Theorem~\ref{thm:London} from vector case to matrix case but also relaxes the condition on the function $f$ (up to differentiability requirement). To see this, consider a function $f$ satisfying the assumption of Theorem~\ref{thm:London}, which implies in particular that $f$ is defined and right-continuous at $0$. Suppose in addition that $f$ is differentiable on $\mathbb{R}_{++}$. Then, we have that for any $s > 0$,
\begin{equation*}
f (s) \ge f(0) \ge f(s) -s f'(s),
\end{equation*}
where the two inequalities follow from the assumption of Theorem~\ref{thm:London}. Therefore, $f' (s) \ge 0 $ for any $s > 0$. Hence, for any $s_2 > s_1 > 0$,
\begin{equation}\label{ineq:sf'_monotone}
s_2 f'(s_2) - s_1 f'(s_1) = (s_2 - s_1) f'(s_2) + s_1 (f'(s_2) - f'(s_1)) \ge 0  , 
\end{equation}
where we used the fact that $f'(s_2) \ge f'(s_1)$, due to the convexity of $f$.
This shows that the function $s \mapsto s f' (s)$ is monotonically increasing on $\mathbb{R}_{++}$. Furthermore, by taking $f(s) = s^q$ for $q>0$, one readily sees that Theorem~\ref{thm:matrix_rearrange} recovers Theorem~\ref{thm:Carlen_Lieb}. Note also that the requirement on $q$ is less stringent than Theorem~\ref{thm:Carlen_Lieb}.

The rest of the paper is organized as follows. Section~\ref{sec:prelim} prepares some auxiliary results. The main result Theorem~\ref{thm:matrix_rearrange} and its extension to rectangular matrices will be proved in Section~\ref{sec:main}. In Section~\ref{sec:applications}, we will present several applications of our matrix rearrangement inequalities. These applications are related to Schatten quasi-norms (see Section~\ref{subsec:schatten}), affine-invariant distance of positive definite matrices (see Section~\ref{subsec:FR_RG}) and Alpha-Beta log-determinant divergences (see Section~\ref{subsec:ABLD}).

\subsection{Notation}
The sets of non-negative and positive real numbers are denoted by $\mathbb{R}_+$ and $\mathbb{R}_{++}$, respectively.
For any vector $u\in \mathbb{R}^n$, the $n\times n$ diagonal matrix with the $i$-th diagonal entry given by $u_i$ is denoted by $\Diag(u)$. Also, we denote by $u^\downarrow$ and $u^\uparrow$ the vectors with entries of $u$ sorted in descending and ascending orders, respectively.
The sets of $n\times n$ symmetric matrices, positive definite matrices and orthogonal matrices are denoted by $\mathbb{S}_n$, $\mathbb{P}_n$ and $\mathbb{O}_n$, respectively.
For any matrix $X\in\RR^{m\times n}$, we denote by $\sigma(X) = (\sigma_1 (X), \dots, \sigma_{\min\{m,n\}}(X))^\top$ the vector of singular values sorted in descending order.
Also, for any $X\in\mathbb{S}_n$, we denote by $\lambda (X) = (\lambda_1(X),\dots,\lambda_n(X) )^\top$ the vector of eigenvalues sorted in descending order. Finally, the $n\times n$ identity matrix is denoted by $I_n$.

\section{Auxiliary Results}
\label{sec:prelim}

From now on, for any function $f:J \to \mathbb{R}$ with domain $J\subseteq \mathbb{R}_+$ and matrix $X\in \mathbb{R}^{m\times n}$ with singular values in $J$, we define 
\begin{equation*}\label{def:S_f_sing}
S_f (X) = \sum_{i = 1}^{\min\{m,n\}} f\left(\sigma_i(X) \right).
\end{equation*}
When $X\in \mathbb{R}^{n\times n}$ is a positive semidefinite matrix with eigenvalues in $J$, we have 
\begin{equation*}\label{eq:S_F_eig}
S_f (X) = \sum_{i = 1}^n f\left(\lambda_i(X) \right) .
\end{equation*}

We present a useful fact about the differentiability of the function $S_f$.

\begin{prop}[{Lewis and Sendov~\cite[Theorem~7.1 and Corollary~7.4]{lewis2005nonsmooth}}]\label{prop:pert}
Assume $m\le n$. Let $f:\mathbb{R}_{++} \ra \RR$ be a differentiable function and $X\in\mathbb{R}^{m\times n}$ be a full-rank matrix (\ie, $\Rank(X) = m$) with singular value decomposition $U\Diag(\sigma (X)\ 0) V^\top$. Then, $S_f$ is differentiable at $X$ with the derivative given by $U \left( \Diag\left(f'\left(\sigma(X)\right)\right)\   0 \right) V^\top$. 
\end{prop}
\noindent Note that the derivative is independent of the choice of singular value decomposition of $X$. Moreover, it is a symmetric matrix if $X \in \mathbb{P}_n$.

The proof of Theorem~\ref{thm:matrix_rearrange} also relies on the following three elementary lemmas, whose proofs are included for self-containedness. The first one is a vector rearrangement inequality.

\begin{lem}\label{lem:rearrange}
Let $f:\mathbb{R}_{++}\ra \mathbb{R}$ be a differentiable function such that $s\mapsto sf'(s)$ is monotonically increasing on $\RR_{++}$. Then, for any positive vectors $u,v\in \mathbb{R}^n$,
\begin{equation*}
\sum_{i=1}^n f(u^\uparrow_i v^\downarrow_i) \le \sum_{i=1}^n f(u_iv_i)\le   \sum_{i=1}^n f(u^\downarrow_i v^\downarrow_i).
\end{equation*}
\end{lem}

\begin{proof}
We prove only the second inequality as the first one can be proved using exactly the same argument.
By re-indexing the components of $v$, we can assume without loss of generality that $v_1 \ge \cdots \ge v_n$, \ie, $v^\downarrow = v$. Suppose that there exist indices $i,j\in \{1,\dots,n\}$ with $i<j$ such that $u_i \le u_j$. We claim that 
\[ f(u_i v_i^\downarrow) + f(u_j v_j^\downarrow) \le f(u_i v_j^\downarrow) + f(u_j v_i^\downarrow).\]
In other words, if there are two components $u_i$ and $u_j$ that are not sorted in descending order, then swapping them in the corresponding summands will not decrease the value of the sum.
Therefore, it suffices to prove that
\begin{equation*}
f(ac)+f(bd)-f(ad)-f(bc)\geq 0\quad \forall a\geq b> 0,\ c\geq d> 0.
\end{equation*} 
Towards that end, we define the function $g(t) = f(tc) - f(td)$ for $t >0$. By the assumption on $f$, 
\begin{equation*}
g'(t) = c f'(tc) - d f'(td) = \frac{1}{t} \left( tc f'(tc) - td f'(td) \right) \ge 0\quad \forall t>0.
\end{equation*}
Therefore, for any $c \ge d > 0$, $g$ is monotonically increasing and hence $g(a) \ge g(b)$, which is equivalent to the inequality $f(ac)+f(bd)-f(ad)-f(bc)\ge 0$. This completes the proof.
\end{proof}

The next one reveals the structure of matrices commuting with a diagonal matrix.

\begin{lem}\label{lem:diag_commute}
    Let $t_1,\dots, t_\ell \in \mathbb{R}$ be distinct real numbers, $n_1,\dots,n_\ell$ be positive integers and $X\in \mathbb{S}_n$ with $n = n_1 + \cdots + n_\ell$. Suppose that
    \begin{equation*}
        X \begin{pmatrix}
        t_1I_{n_1}&&\\
        &\ddots&\\
        &&t_\ell I_{n_\ell}\\
        \end{pmatrix} = \begin{pmatrix}
        t_1I_{n_1}&&\\
        &\ddots&\\
        &&t_\ell I_{n_\ell}\\
        \end{pmatrix} X.
    \end{equation*}
    Then, there exist symmetric matrices $\tilde{X}_1\in\mathbb{S}_{n_1},\dots, \tilde{X}_\ell \in \mathbb{S}_{n_\ell}$ such that
    \begin{equation*}
        X=\begin{pmatrix}
        \tilde{X}_1&&\\
        &\ddots&\\
        &&\tilde{X}_\ell
        \end{pmatrix}.
    \end{equation*}
\end{lem}

\begin{proof}
    To prove the lemma, for any matrix $Y\in \mathbb{R}^{n\times n}$, we partition its entries into blocks so that for any $i,j = 1,\dots, \ell$, the $ij$-th block, denoted by $[Y]_{ij}$, is $n_i \times n_j$.
    Let $D\in \mathbb{R}^{n\times n}$ be the diagonal matrix such that $[D]_{ii} = t_i I_{n_i}$.
    Then, for any $i,j = 1,\dots, \ell$, we have
    \[ [XD]_{ij} = \sum_{ k = 1}^\ell [X]_{ik} [D]_{kj} = t_j [X]_{ij} \quad\text{and}\quad [DX]_{ij} = \sum_{ k = 1}^\ell [D]_{ik} [X]_{kj} = t_i [X]_{ij} .\]
    The supposition implies $t_i [X]_{ij} = t_j [X]_{ij}$. Since $t_1,\dots, t_\ell$ are distinct, we conclude that $[X]_{ij}$ is a zero matrix for any $i \neq j$. This completes the proof.
\end{proof}

The last one concerns diagonal matrices in the same orthogonal conjugacy class. 

\begin{lem}
    \label{lem:orthogonal_conjugation}
    Let $D,\hat{D}\in \mathbb{R}^{n\times n}$ be two diagonal matrices and $Q\in \mathbb{O}_n$. Suppose that the diagonal entries of $D$ are distinct and $\hat{D} = Q D Q^\top $. Then, $Q$ is a permutation matrix.
\end{lem}

\begin{proof}
    By supposition, we have that $\hat{D}Q = Q D$. Next, for any $i,j = 1,\dots,n$,
    \begin{equation*}
        (\hat{D}Q)_{ij} = \sum_{k = 1}^n \hat{D}_{ik} Q_{kj} = \hat{D}_{ii} Q_{ij} \quad\text{and}\quad (QD)_{ij} = \sum_{k = 1}^n  Q_{ik} D_{kj} = D_{jj} Q_{ij},
    \end{equation*}
    which implies that $(\hat{D}_{ii} - D_{jj}) Q_{ij} = 0$. Since eigenvalues are preserved by conjugation, we know that the set of numbers on the diagonal of $\hat{D}$ must be the same as that of $D$. In other words, there exists a permutation $\pi$ on $\{1,\dots,n\}$ such that $\hat{D}_{ii} = D_{\pi(i) \pi(i)}$. Therefore, we have $(D_{\pi(i) \pi(i)} - D_{jj}) Q_{ij} = 0$ for any $i,j = 1,\dots,n$. Since $D_{11},\dots, D_{nn}$ are distinct, $Q_{ij} = 0$ unless $\pi (i) = j$, in which case $Q_{ij} = 1$ due to the orthogonality. This completes the proof.  
\end{proof}

\section{Main Results}
\label{sec:main}
We are now ready to prove Theorem~\ref{thm:matrix_rearrange}.

\begin{proof}[Proof of Theorem~\ref{thm:matrix_rearrange}]
Let $A,B \in \mathbb{P}_n$.
Since eigenvalues $\lambda_i(\cdot)$ are continuous on $\mathbb{R}^{n\times n}$ (see, \eg, \cite[Corollary~VI.1.6]{bhatia1997matrix}) and $f$ is continuous on $\mathbb{R}_{++}$, we can also assume without loss of generality that the eigenvalues of $A,B$ are all distinct. Furthermore, suppose that the inequality holds for any function $f$ such that $s\mapsto sf'(s)$ is strictly increasing on $\RR_{++}$. Then, for any $\tilde{f}$ such that $s\mapsto s \tilde{f}'(s)$ is only monotonically increasing on $\RR_{++}$, we consider the perturbed function $\tilde{f}_\epsilon (s) \coloneqq \tilde{f}(s) + \epsilon s$. For $s_2 > s_1 > 0$ and $\epsilon > 0$,
\begin{equation*}
s_2 \tilde{f}_\epsilon' (s_2) - s_1 \tilde{f}_\epsilon' (s_1) = s_2 \tilde{f}'(s_2) - s_1 \tilde{f}'(s_1) + \epsilon (s_2 - s_1) > 0.
\end{equation*}
Therefore, $s\mapsto s \tilde{f}_\epsilon'(s)$ is strictly increasing on $\RR_{++}$. By supposition, the inequality holds for $f = \tilde{f}_\epsilon$. Taking limit $\epsilon \searrow 0$, the rearrangement inequality then holds for $\tilde{f}$ as well. Hence, it suffices to prove the inequality for functions $f$ such that $s\mapsto sf'(s)$ is strictly increasing on $\RR_{++}$.

Since $B^\half A B^{\half} \in \mathbb{P}_n$, by the definition of $S_f$, we have 
\[S_f ( B^{\half} A B^{\half} ) = \sum_{i = 1}^n f( \lambda_i( B^{\half} A B^{\half} )) .\]
We start with the lower bound.
Let $U_A\Sigma_AU_A^\top$ and $U_B\Sigma_BU_B^\top$ be the eigenvalue decompositions of $A$ and $B$, respectively. Consider the minimization problem
\begin{equation}\label{opt:S_f}
\inf_{U\in \mathbb{O}_n}  S_f \big(\Sigma_B^{\half} U\Sigma_A U^\top \Sigma_B^{\half}\big).
\end{equation}
By the continuity of $f$ and eigenvalues $\lambda_i(\cdot)$, the function $U \mapsto S_f (\Sigma_B^{\half} U\Sigma_A U^\top \Sigma_B^{\half} )$ is continuous on $\mathbb{O}_n$. Since $\mathbb{O}_n$ is compact, a minimizer $Q\in \mathbb{O}_n$ to problem~\eqref{opt:S_f} exists. We thus have
\begin{equation*}
S_f ( B^{\half} A B^{\half} ) \geq \min_{U\in \mathbb{O}_n} S_f \big(\Sigma_B^{\half} U\Sigma_A U^\top \Sigma_B^{\half}\big) = S_f\big(\Sigma_B^{\half} Q\Sigma_A Q^\top \Sigma_B^{\half}\big).
\end{equation*}
Let 
\begin{equation*}
\hat{A} = Q \Sigma_A Q^\top \quad\text{and}\quad C = \Sigma_B^{\half}\hat{A}\Sigma_B^{\half}.
\end{equation*}
Since $A, B \in\mathbb{P}_n$, we have that $C  \in\mathbb{P}_n$ and hence that $\lambda (C) = \sigma (C)$.
Fix any eigenvalue decomposition $C = U_C \Diag(\lambda(C)) U_C^\top$ and let $\Delta = U_C \Diag\left( f' (\lambda(C)) \right)U_C^\top$.
By Proposition~\ref{prop:pert}, $
\Delta$ is the derivative of $S_f$ at $C$. Consider the skew-symmetric matrix \[
K = \hat{A}\Sigma_B^{\half}\Delta \Sigma_B^{\half} - \Sigma_B^{\half} \Delta \Sigma_B^{\half}\hat{A}.\] 
By the skew-symmetry of $K$, $ \mathrm{Exp} (\epsilon K)\in \mathbb{O}_n$ for any $\epsilon\in\mathbb{R}$, where $ \mathrm{Exp} (\cdot)$ denotes the matrix exponential; see~\cite[Section~1]{cardoso2010exponentials} for example. 
Recalling that $\mathrm{Exp}(X) = I_n + X + \tfrac{1}{2}X^2 + \cdots$ for any matrix $X\in\mathbb{R}^{n\times n}$, we have 
\begin{equation*}
\begin{split}
& S_f \big(\Sigma_B^{\half} \mathrm{Exp} (\epsilon K)\hat{A} \mathrm{Exp} (\epsilon K)^\top \Sigma_B^{\half} \big) \\
= & S_f \big(\Sigma_B^{\half} (I+\epsilon K)\hat{A}(I-\epsilon K) \Sigma_B^{\half}\big) + o(\epsilon) \\
= & S_f \big( \Sigma_B^{\half}\hat{A} \Sigma_B^{\half} + \epsilon \Sigma_B^{\half}(K\hat{A}-\hat{A} K ) \Sigma_B^{\half} \big) + o(\epsilon) \\
= & S_f \Big( \Sigma_B^{\half}\hat{A}\Sigma_B^{\half} \Big) + \epsilon\left\langle \Sigma_B^{\half} K \hat{A}\Sigma_B^{\half} - \Sigma_B^{\half}\hat{A} K \Sigma_B^{\half},\, \Delta  \right\rangle + o(\epsilon) \\
= & S_f \Big( \Sigma_B^{\half}\hat{A}\Sigma_B^{\half}\Big) + \epsilon\left\langle  K ,\, \Sigma_B^{\half} \Delta \Sigma_B^{\half}\hat{A} - \hat{A}\Sigma_B^{\half} \Delta \Sigma_B^{\half}\right\rangle + o(\epsilon) \\
= & S_f \Big( \Sigma_B^{\half}\hat{A}\Sigma_B^{\half} \Big)-\epsilon\norm{ K }^2_{\mathrm{F}} + o(\epsilon),
\end{split}
\end{equation*}
where $\norm{\,\cdot\,}_{\mathrm{F}}$ denotes the Frobenius norm, the first and second equalities follow from the continuity of $f$ and eigenvalues $\lambda_i(\cdot)$, the third from Proposition~\ref{prop:pert}, the fourth by the symmetry of $\Sigma_B^{\half}$ and $\hat{A}$, and the fifth by the definition of $K$.
Therefore, $ K = 0$ because otherwise the last display would violate the minimality of $Q$ by taking a sufficiently small $\epsilon>0$. Hence, we have that
\begin{equation*}
\hat{A} \Sigma_B^{\half} \Delta  \Sigma_B^{\half} =\Sigma_B^{\half} \Delta  \Sigma_B^{\half}\hat{A}, 
\end{equation*}
which, upon multiplying both sides by $\Sigma_B^{\half}$, yields
\begin{equation}\label{eq:pf_1}
C \Delta  \Sigma_B = \Sigma_B \Delta C.
\end{equation}
Letting
\begin{equation}\label{def:C_hat}
\hat{C} = U_C \Diag\left(\lambda(C) \circ f'(\lambda(C)) \right)U_C^\top ,
\end{equation}
it then follows from the definition of $\Delta$ that
\begin{equation*}
C \Delta 
=  U_C \Diag\left(\lambda(C) \circ f'(\lambda(C)) \right)U_C^\top = \hat{C} = U_C \Diag\left(f'(\lambda(C) \circ \lambda(C)) \right)U_C^\top = \Delta C.
\end{equation*}
Thus, equality~\eqref{eq:pf_1} is equivalent to
\begin{equation}\label{eq:7}
\hat{C} \Sigma_B = \Sigma_B \hat{C}.
\end{equation}
In other words, $\Sigma_B$ commutes with $\hat{C}$.
We claim that $\Sigma_B$ also commutes with $C$. To prove the claim, note that we can write
\begin{equation}\label{eq:diag_form}
\Diag(\lambda(C)) = 
\begin{pmatrix}
c_1 I_{n_1} & & \\
 & \ddots & \\
 & & c_\ell I_{n_\ell}
\end{pmatrix},
\end{equation}
for some positive integers $\ell, n_1,\dots,n_\ell$ with $n_1 +\cdots + n_\ell = n$ and real numbers $c_1>\cdots> c_\ell >0 $.
Since the function $s\mapsto s f'(s)$ is strictly increasing on $\RR_{++}$,
the diagonal matrix $ \Diag\left(\lambda(C) \circ f'(\lambda(C)) \right)$ takes the same form as \eqref{eq:diag_form}, \ie, for some real numbers $t_1 > \cdots > t_\ell >0 $,
\begin{equation}\label{eq:diag_form_1}
\Diag\left(\lambda(C) \circ f'(\lambda(C)) \right) = 
\begin{pmatrix}
t_1 I_{n_1} & & \\
 & \ddots & \\
 & & t_\ell I_{n_\ell}
\end{pmatrix}.
\end{equation}
From~\eqref{def:C_hat}, \eqref{eq:7} and \eqref{eq:diag_form_1}, we have that 
\begin{equation*}
U_C^\top\Sigma_B U_C\begin{pmatrix}
t_1I_{n_1}&&\\
&\ddots&\\
&&t_\ell I_{n_\ell}\\
\end{pmatrix}=\begin{pmatrix}
t_1I_{n_1}&&\\
&\ddots&\\
&&t_\ell I_{n_\ell}\\
\end{pmatrix}U_C^\top\Sigma_B U_C,
\end{equation*}
which, upon invoking Lemma~\ref{lem:diag_commute}, implies the existence of symmetric matrices $\tilde{B}_1\in\mathbb{R}^{n_1\times n_1},\dots, \tilde{B}_\ell \in \mathbb{R}^{n_\ell \times n_\ell}$ such that
\begin{equation*}
U_C^\top\Sigma_B U_C=\begin{pmatrix}
\tilde{B}_1&&\\
&\ddots&\\
&&\tilde{B}_\ell
\end{pmatrix}.
\end{equation*}
Observing that
\begin{equation*}
\begin{pmatrix}
\tilde{B}_1&&\\
&\ddots&\\
&&\tilde{B}_\ell
\end{pmatrix}\begin{pmatrix}
c_1I_{n_1}&&\\
&\ddots&\\
&&c_\ell I_{n_\ell }
\end{pmatrix} = \begin{pmatrix}
c_1I_{n_1}&&\\
&\ddots&\\
&&c_\ell I_{n_\ell }
\end{pmatrix}\begin{pmatrix}
\tilde{B}_1&&\\
&\ddots&\\
&&\tilde{B}_\ell 
\end{pmatrix},
\end{equation*}
we arrive at
\begin{equation*}
U_C^\top \Sigma_B U_C \Diag(\lambda(C)) = \Diag(\lambda(C)) U_C^\top \Sigma_B U_C .
\end{equation*}
Multiplying the last display by $U_C$ from the left and $U_C^\top$ from the right, we get
\begin{equation}\label{eq:commute}
\Sigma_B C = C \Sigma_B, 
\end{equation}
which proves the claim. Then, it follows from equality~\eqref{eq:commute} and the definition of $C$ that $\Sigma_B \hat{A} = \hat{A} \Sigma_B$. 
Since $\Sigma_B$ is a diagonal matrix with distinct diagonal entries, the matrix $\hat{A} = Q \Sigma_A Q^\top$ is also diagonal by Lemma~\ref{lem:diag_commute}. Next, using Lemma~\ref{lem:orthogonal_conjugation} and that $\Sigma_A$ is a diagonal matrix with distinct diagonal entries, the minimizer $Q$ is a permutation matrix.
Hence, there exists a permutation $\pi$ on $\{1,\dots,n\}$ such that
\begin{equation}\label{eq:6}
S_f \big(\Sigma_B^{\half} Q \Sigma_A Q^\top \Sigma_B^{\half} \big) = \sum_{i = 1}^n f \left( \lambda_i (A) \lambda_{\pi(i)}(B) \right).
\end{equation}
Using \eqref{eq:6}, Lemma~\ref{lem:rearrange} and the fact that $A,B \in\mathbb{P}_n$, we get
\begin{equation*}
S_f ( B^{\half} A B^{\half} ) \ge S_f \big( \Sigma_B^{\half} Q \Sigma_A Q^\top \Sigma_B^{\half} \big) \ge \sum_{i = 1}^n f(\lambda_i(A) \lambda_{n-i+1}(B)) ,
\end{equation*}
which yields the lower bound. The upper bound of $S_f(B^{\half} A B^{\half})$ can be proved similarly by considering the maximization problem
\begin{equation*}
\sup_{U\in \mathbb{O}_n} S_f  \big(\Sigma_B^{\half} U \Sigma_A U^\top \Sigma_B^{\half} \big),
\end{equation*}
instead of the minimization problem~\eqref{opt:S_f}. Finally, the extension to positive semidefinite matrices follows from the right continuity of $f$ at $0$, the continuity of eigenvalues $\lambda_i(\cdot)$ and taking limits. This completes the proof.
\end{proof}

We next prove a matrix rearrangement inequality for singular values of rectangular matrices.
\begin{thm}\label{thm:matrix_rearrange_singular}
Let $f:\mathbb{R}_{++}\ra \mathbb{R}$ be a differentiable function such that $s\mapsto sf'(s)$ is monotonically increasing on $\RR_{++}$. Then, for any full-rank matrices $X,Y\in  \RR^{m\times n}$,
\begin{equation*}
\sum_{i=1}^{\min\{m,n\}} f\left(\sigma_i (X)\sigma_{n-i+1} (Y)\right) \le \sum_{i=1}^{\min\{m,n\}} f\big(\sigma_i (X^\top Y)\big)\le \sum_{i=1}^{\min\{m,n\}} f\left(\sigma_i (X)\sigma_i (Y)\right).
\end{equation*}
If $f$ is additionally defined and right-continuous at $0$, then the inequality holds for any matrices $X,Y\in  \RR^{m\times n}$.
\end{thm}

\begin{proof}
    Let $X,Y\in \mathbb{R}^{m\times n}$. Since $\sigma_i ( X^\top Y ) = \sigma_i (X Y^\top)$ for $i = 1,\dots, \min\{ m, n\}$, we can assume without loss of generality that $m \le n$.
    By the definition of singular values, we have that for any $i = 1,\dots, m$, 
    \begin{align*}
        \sigma_i(X^\top Y ) = \lambda_i^{\half} ( X^\top Y Y^\top X ) = \lambda_i^{\half} ( X X^\top Y Y^\top ) .
    \end{align*}
    Similarly, we have that for any $i = 1,\dots, m$, 
    \begin{align*}
        \sigma_i (X) = \lambda_i^{\half} (XX^\top) \quad \text{and}\quad \sigma_i (Y) = \lambda_i^{\half} (YY^\top).
    \end{align*}
    Also, $ XX^\top$ and $YY^\top$ are positive definite if and only if $X$ and $Y$ have full rank. Next, let $g: \mathbb{R}_{++} \to \mathbb{R}$ be the function defined by $g(t) = f(\sqrt{t})$. Then,  $g$ is differentiable on $\mathbb{R}_{++}$ and $t g'(t) =  \tfrac{1}{2} \sqrt{t} f'(\sqrt{t})$, whose monotonicity inherits from the map $s\mapsto s f'(s)$. Moreover, if $f$ is defined and right-continuous at $0$, so is $g$.
    Noting that $\lambda_i ( B^{\half} A B^{\half} ) = \lambda_i (A B)$ for any positive semidefinite matrices $A, B\in \mathbb{R}^{m\times m}$, applying Theorem~\ref{thm:matrix_rearrange} with $A = XX^\top$ and $B = YY^\top$ yields the desired conclusion.
\end{proof}

\section{Applications}
\label{sec:applications}

\subsection{Schatten Quasi-Norms}
\label{subsec:schatten}
For $q > 0$ and $X\in\mathbb{R}^{m\times n}$, we denote 
\begin{equation*}
\norm{X}_q = \left( \sum_{i = 1}^{ \min\{m,n\} } \sigma_i^q (X) \right)^{\frac{1}{q}}.
\end{equation*}
If $q \ge 1$, $\norm{X}_q$ is the so-called Schatten-$q$ norm of $X$. 
The Banach space associated with the Schatten-$q$ norm is a classical subject in operator theory and has attracted much research since the forties, see~\cite{schatten1960norm,gohberg1969introduction,tomczak1974moduli}. On the other hand, if $q \in (0,1)$, $\norm{X}_q$ is no longer a norm but only a quasi-norm.
Motivated by its proximity to the rank function, the Schatten-$q$ quasi-norm with $q\in (0,1]$ has been applied to low-rank matrix recovery~\cite{rohde2011estimation,yue2016perturbation}. 

As an application of our Theorem~\ref{thm:matrix_rearrange_singular}, we obtain the following inequality on $\norm{\cdot}_q$ for general $q\in \mathbb{R}$, which could potentially find applications in the analysis of the statistical properties of and numerical algorithms for low-rank matrix recovery based on the Schatten-$q$ quasi-norm.
\begin{coro}
\label{coro:schatten}
Let $X, Y\in \mathbb{R}^{m\times n}$ and $q > 0$. Then, it holds that
\begin{equation*}
 \sum_{i = 1}^{ \min\{m,n\} } \sigma_i^q (X) \sigma_{n-i+1}^q (Y) \le \big\|X^\top  Y\big\|_q^q \le \sum_{i = 1}^{ \min\{m,n\} } \sigma_i^q (X) \sigma_{i}^q (Y).
\end{equation*}
\end{coro}

\begin{proof}
Let $f(s) = s^q$ for $s\in \mathbb{R}_+$. Then, the function $s\mapsto s f'(s) = q s^q$ is monotonically increasing on $\mathbb{R}_{++}$. The desired inequality then follows from Theorem~\ref{thm:matrix_rearrange_singular}.
\end{proof}

\subsection{Affine-Invariant Geometry on $\mathbb{P}_n$}
\label{subsec:FR_RG}

It is well-known that the cone $\mathbb{P}_n$ of $n\times n$ positive definite matrices is a differentiable manifold of dimension $n(n+1)/2$, see, \eg, \cite[Chapter 6]{bhatia2009positive}. For any $A \in \mathbb{P}_n$, the tangent space $T_A \mathbb{P}_n$ at $A$ can be identified with the set of $n\times n$ symmetric matrices $\mathbb{S}_n$. We can equip the cone $\mathbb{P}_n$ with a Riemannian metric called the affine-invariant metric: for any $X, Y \in T_A \mathbb{P}_n \cong \mathbb{S}_n$,
\begin{equation*}
\langle X , Y\rangle_A \coloneqq \Tr\big( X A^{-1} Y A^{-1} \big).
\end{equation*}
Indeed, one can easily check that, given any $A \in\mathbb{P}_n$, the map $\langle \cdot , \cdot \rangle_A$ defines a symmetric positive definite bilinear form on $\mathbb{S}_n$. For any $A,B\in \mathbb{P}_n$, The corresponding Riemannian distance is given by
\begin{equation*}
d_{\mathbb{P}_n}  (A, B) = \big\|\mathrm{Log} \big(B^{-\half} A B^{-\half}\big) \big\|_{\mathrm{F}}, 
\end{equation*}
where $\mathrm{Log}(\,\cdot\, )$ denotes the matrix logarithm. This distance enjoys many interesting properties~\cite[Chapter 6]{bhatia2009positive} and finds applications in diverse areas such as machine learning~\cite{nguyen2019calculating,sra2016geometric}, image and video processing~\cite{dryden2009non,tuzel2008pedestrian} and elasticity theory~\cite{moakher2006closest,neff2014logarithmic}. 

More generally, for any $q\ge 1$ and $A,B\in \mathbb{P}_n$, we define 
\begin{equation*}
d_q (A, B) = \big\| \mathrm{Log} \big( B^{-\half} A B^{-\half} \big) \big\|_q.
\end{equation*} 
It has been proved in \cite[Section 6]{bhatia2009positive} that $d_q $ is a distance on $\mathbb{P}_n$ for any $q \ge 1$. 
\begin{coro}\label{coro:d_q}
Let $A,B\in \mathbb{P}_n$ and $q \ge 1$. Then, it holds that
\begin{equation*}
\sum_{i = 1}^n \left| \log  \lambda_i (A) -\log \lambda_i (B)  \right|^q \le d^q_q (A, B) \le  \sum_{i = 1}^n \left| \log  \lambda_i (A) -\log \lambda_{n-i+1} (B)  \right|^q .
\end{equation*}
\end{coro}

\begin{proof}
We first assume that $q> 1$. Next, we note that
\begin{equation*}
 d_q^q (A,B)  = \norm{\mathrm{Log}\big( B^{-\half} A B^{-\half} \big)}_q^q = \sum_{ i = 1}^n f\big( \lambda_i \big( B^{-\half} A B^{-\half} \big) \big),
\end{equation*}
where $f (s) = |\log s|^q$ for $s \in \mathbb{R}_{++}$. It can be readily verified that $f$ is differentiable on $\mathbb{R}_{++}$ and 
\begin{equation*}
f' (s) = \begin{cases}
\frac{q}{s} (\log s)^{q-1}, & \text{if } s\ge 1,\\
-\frac{q}{s} (\log \frac{1}{s})^{q-1}, & \text{if } 0<s< 1.
\end{cases}
\end{equation*}
Hence,
\begin{equation}\label{eq:sf'}
s f'(s) = \sgn(\log s)\cdot q |\log s|^{q-1},
\end{equation}
where $\sgn(\cdot)$ denotes the sign of a real number.
To show that $s\mapsto s f' (s)$ is monotonically increasing, we let $ s_2 > s_1 > 0$ and consider four different cases: $s_2 \ge 1> s_1 > 0 $, $s_2 > 1\ge s_1 > 0 $, $s_2 > s_1 \ge 1$ and $1 \ge s_2 > s_1 > 0$. For the first two cases, by \eqref{eq:sf'}, we have that $s_2 f' (s_2) \ge 0 \ge s_1 f' (s_1)$.
For the third case of $s_2 > s_1 \ge 1$, \eqref{eq:sf'} shows that the function $ s\mapsto sf'(s) $ is continuous on $[1,\infty)$ and differentiable on $(1,\infty)$. Also, for any $s > 1$,
\begin{equation*}
(sf'(s))' = \left( q (\log s)^{q-1} \right)' = \frac{q(q-1) (\log s)^{q-2}}{s} \ge 0,
\end{equation*} 
which implies that the function $s\mapsto sf'(s)$ is monotonically increasing on $[1,\infty)$.
We therefore have $s_2 f' (s_2) \ge s_1 f' (s_1)$.
Similarly, for the fourth case of $s_1 < s_2 \le 1$, \eqref{eq:sf'} shows that the function $ s\mapsto sf'(s) $ is continuous on $(0, 1]$ and differentiable on $(0,1)$. 
Also, for any $s \in (0, 1)$,
\begin{equation*}
(sf'(s))' = \left(- q \left(\log \tfrac{1}{s} \right)^{q-1} \right)' = \frac{q(q-1) \left(\log\tfrac{1}{s} \right)^{q-2}}{s} \ge 0,
\end{equation*}
which implies that the function $s\mapsto sf'(s)$ is monotonically increasing on $(0, 1]$.
We therefore have $s_2 f' (s_2) \ge s_1 f' (s_1)$.
Hence, by Theorem~\ref{thm:matrix_rearrange}, we obtain
\begin{equation*}
\sum_{i=1}^n f\big(\lambda_i (A)\lambda_i (B^{-1})\big) \ge \sum_{ i = 1}^n f\big( \lambda_i \big( B^{-\half} A B^{-\half} \big) \big) \ge \sum_{i=1}^n f\big(\lambda_i (A)\lambda_{n-i+1} (B^{-1})\big),
\end{equation*}
which is equivalent to
\begin{equation*}
\sum_{i = 1}^n \left| \log  \lambda_i (A) -\log \lambda_i (B)  \right|^q \le d^q_q (A, B) \le  \sum_{i = 1}^n \left| \log  \lambda_i (A) -\log \lambda_{n-i+1} (B)  \right|^q .
\end{equation*}
The case of $q = 1$ follows from limiting arguments. This completes the proof.
\end{proof}

For any $A,B\in \mathbb{P}_n$, Corollary~\ref{coro:d_q} with $q = 2$ immediately implies that the Riemannian distance $d_{\mathbb{P}_n}$ with respect to the affine-invariant metric satisfies the inequality
\begin{equation*}\label{ineq:d_P}
\sum_{i = 1}^n \left( \log  \lambda_i (A) -\log \lambda_i (B)  \right)^2 \le d^2_{\mathbb{P}_n} (A, B) \le  \sum_{i = 1}^n \left( \log  \lambda_i (A) -\log \lambda_{n-i+1} (B)  \right)^2 .
\end{equation*}

\subsection{Alpha-Beta Log-Determinant Divergences}
\label{subsec:ABLD}
Divergences, which are measures of dissimilarity between two positive definite matrices, play an important role in information geometry and find applications across many areas, see \cite{nielsen2013matrix,cichocki2015log,amari2016information} and the references therein.  As a unification and generalization of many existing divergences in the literature, the family of Alpha-Beta log-determinant divergences (or AB log-det divergences for short) is introduced and studied in \cite{cichocki2015log}. Given any $\alpha, \beta\in \mathbb{R}$ such that $\alpha \beta \neq 0$ and $\alpha + \beta \neq 0$, the AB log-det divergence with parameter $\alpha$ and $\beta$ between $A,B\in \mathbb{P}_n$ is defined as 
\begin{equation*}
D_{\alpha, \beta}(A \| B) = \frac{1}{\alpha \beta} \log \det\left( \frac{\alpha ( A B^{-1})^\beta + \beta (A B^{-1})^{-\alpha}  }{\alpha + \beta} \right)
\end{equation*}
The definition of the AB log-det divergence can be extended to the cases of $\alpha \beta= 0$ and/or $\alpha + \beta = 0$ by taking limits. In particular, we have
\begin{equation*}
D_{\alpha, \beta}(A \| B) =
\begin{cases}
\medskip
\displaystyle\frac{1}{\alpha^2} \left( \Tr \big(\big(BA^{-1}\big)^\alpha - I\big) - \alpha \log\det\big(BA^{-1}\big)\right), & \text{if } \alpha\neq 0,\, \beta = 0, \\

\medskip
\displaystyle\frac{1}{\beta^2} \left( \Tr \big( \big(AB^{-1}\big)^\beta - I\big) - \beta \log\det\big(AB^{-1}\big)\right), & \text{if } \beta\neq 0,\, \alpha = 0, \\

\medskip
\displaystyle\frac{1}{\alpha^2} \log\left( \frac{ \det \big(AB^{-1}\big)^{\alpha} }{ \det \left( I + \log\big(AB^{-1}\big)^\alpha  \right) } \right), & \text{if } \alpha = -\beta \neq 0.
\end{cases}
\end{equation*}

For $\alpha = \beta = 0$, $D_{0,0} (A\| B) = \tfrac{1}{2} d_{\mathbb{P}_n}^2 (A,B)$. We thus omit the discussion on this case and refer the readers to Section~\ref{subsec:FR_RG}.
Besides the squared affine-invariant Riemannian metric, many other well-known divergences are special cases of AB log-det divergences, including the S-divergence~\cite{sra2016positive} where $\alpha = \beta = \tfrac{1}{2}$ and the Stein's loss~\cite{james1992estimation} (also called the Burg divergence) where $\alpha = 0$ and $\beta = 1$. For more examples of AB log-det divergences, we refer the readers to \cite[Section 3]{cichocki2015log}.

Note that $AB^{-1}$ is diagonalizable for any $A,B\in\mathbb{P}_n$. Therefore, as pointed out in \cite{cichocki2015log}, AB log-det divergences $D_{\alpha, \beta}(A \| B)$ can  be expressed via the (positive) eigenvalues of the matrix $AB^{-1}$, which coincide with those of the matrix $B^{-\half} A B^{-\half}$:
\begin{equation*}
D_{\alpha, \beta}(A \| B) = 
\begin{cases}
\medskip
\displaystyle\frac{1}{\alpha \beta} \sum_{i=1}^n \log\left( \frac{ \alpha \lambda_i^\beta (B^{-\half} A B^{-\half}) + \beta \lambda_i^{-\alpha} (B^{-\half} A B^{-\half}) }{\alpha + \beta} \right), &\text{if } \alpha \beta,\, \alpha + \beta\neq 0, \\

\medskip
\displaystyle\frac{1}{\alpha^2} \sum_{i=1}^n \left( \lambda_i^{-\alpha} (B^{-\half} A B^{-\half}) - \log\lambda_i^{-\alpha} (B^{-\half} A B^{-\half}) -1 \right) , & \text{if } \alpha\neq 0,\, \beta = 0, \\

\medskip
\displaystyle\frac{1}{\beta^2} \sum_{i=1}^n \left( \lambda_i^{\beta} (B^{-\half} A B^{-\half}) - \log\lambda_i^{\beta} (B^{-\half} A B^{-\half}) -1 \right), & \text{if } \beta\neq 0,\, \alpha = 0, \\

\medskip
\displaystyle\frac{1}{\alpha^2} \sum_{i=1}^n \log \left( \frac{\lambda_i^\alpha \left( B^{-\half} A B^{-\half} \right) }{1 + \log \lambda_i^\alpha \left( B^{-\half} A B^{-\half} \right)} \right), & \text{if } \alpha = -\beta \neq 0.
\end{cases}
\end{equation*}

The following upper and lower bounds for AB log-det divergences are generalizations of the \cite[Corollary 3.8]{sra2016positive}.

\begin{coro}
Let $A,B\in \mathbb{P}_n$ and $\alpha, \beta \in \mathbb{R}$. Then,
\begin{equation*}
D_{\alpha, \beta}(A \| B) \le
\begin{cases}
\medskip
\displaystyle\frac{1}{\alpha \beta} \sum_{i=1}^n \log\left( \frac{ \alpha \lambda_i^\beta (A) \lambda_{n-i+1}^{-\beta} (B) + \beta \lambda_i^{-\alpha} (A) \lambda_{n-i+1}^{\alpha} (B) }{\alpha + \beta} \right), &\text{if } \alpha \beta >0,\, \alpha + \beta\neq 0, \\

\medskip
\displaystyle\frac{1}{\alpha \beta} \sum_{i=1}^n \log\left( \frac{ \alpha \lambda_i^\beta (A) \lambda_{i}^{-\beta} (B) + \beta \lambda_i^{-\alpha} (A) \lambda_{i}^{\alpha} (B) }{\alpha + \beta} \right), &\text{if } \alpha \beta < 0,\, \alpha + \beta\neq 0, \\

\medskip
\displaystyle\frac{1}{\alpha^2} \sum_{i=1}^n \left( \frac{\lambda_{n-i+1}^{\alpha} (B)}{\lambda_i^{\alpha} (A)}  - \log\left( \frac{\lambda_{n-i+1}^{\alpha} (B)}{\lambda_i^{\alpha} (A)} \right) -1 \right) , & \text{if } \alpha\neq 0,\, \beta = 0, \\

\medskip
\displaystyle\frac{1}{\beta^2} \sum_{i=1}^n \left( \frac{\lambda_i^{\beta} (A)}{\lambda_{n-i+1}^{\beta} (B)}  - \log\left( \frac{\lambda_i^{\beta} (A)}{\lambda_{n-i+1}^{\beta} (B)} \right) -1 \right), & \text{if } \beta\neq 0,\, \alpha = 0, \\

\medskip
\displaystyle\frac{1}{\alpha^2} \sum_{i=1}^n \log \left( \frac{\lambda_i^\alpha (A) \lambda_{n-i+1}^{-\alpha} (B) }{1 + \log\left( \lambda_i^\alpha (A) \lambda_{n-i+1}^{-\alpha} (B) \right) } \right), & \text{if } \alpha = -\beta \neq 0;
\end{cases}
\end{equation*}
and 
\begin{equation*}
D_{\alpha, \beta}(A \| B) \ge
\begin{cases}
\medskip
\displaystyle\frac{1}{\alpha \beta} \sum_{i=1}^n \log\left( \frac{ \alpha \lambda_i^\beta (A) \lambda_{i}^{-\beta} (B) + \beta \lambda_i^{-\alpha} (A) \lambda_{i}^{\alpha} (B) }{\alpha + \beta} \right), &\text{if } \alpha \beta >0,\, \alpha + \beta\neq 0, \\

\medskip
\displaystyle\frac{1}{\alpha \beta} \sum_{i=1}^n \log\left( \frac{ \alpha \lambda_i^\beta (A) \lambda_{n-i+1}^{-\beta} (B) + \beta \lambda_i^{-\alpha} (A) \lambda_{n-i+1}^{\alpha} (B) }{\alpha + \beta} \right), &\text{if } \alpha \beta < 0,\, \alpha + \beta\neq 0, \\

\medskip
\displaystyle\frac{1}{\alpha^2} \sum_{i=1}^n \left( \frac{\lambda_{i}^{\alpha} (B)}{\lambda_i^{\alpha} (A)}  - \log\left( \frac{\lambda_{i}^{\alpha} (B)}{\lambda_i^{\alpha} (A)} \right) -1 \right) , & \text{if } \alpha\neq 0,\, \beta = 0, \\

\medskip
\displaystyle\frac{1}{\beta^2} \sum_{i=1}^n \left( \frac{\lambda_{i}^{\beta} (A)}{\lambda_i^{\beta} (B)}  - \log \left( \frac{\lambda_{i}^{\beta} (A)}{\lambda_i^{\beta} (B)} \right) -1 \right), & \text{if } \beta\neq 0,\, \alpha = 0, \\

\medskip
\displaystyle\frac{1}{\alpha^2} \sum_{i=1}^n \log \left( \frac{\lambda_i^\alpha (A) \lambda_{i}^{-\alpha} (B) }{1 + \log\left( \lambda_i^\alpha (A) \lambda_{i}^{-\alpha} (B) \right) } \right), & \text{if } \alpha = -\beta \neq 0.
\end{cases}
\end{equation*}
\end{coro}

\begin{proof}
We start with the case of $\alpha \beta, \alpha + \beta\neq 0$. Consider the function $f: \mathbb{R}_{++} \ra \mathbb{R} $ defined by
\begin{equation*}
f(s) = \log \left( \frac{\alpha s^\beta + \beta s^{-\alpha}}{\alpha + \beta} \right).
\end{equation*}
We have that for any $s > 0$,
\begin{equation*}
s f' (s) = \frac{\alpha \beta (s^\beta - s^{-\alpha})}{\alpha s^\beta + \beta s^{-\alpha}}.
\end{equation*}
Then, for any $s > 0$,
\begin{equation*}
\left( s f' (s) \right)' = \frac{\alpha \beta (\alpha + \beta)^2 s^{ \alpha + \beta - 1}}{(\alpha s^{\alpha + \beta} + \beta )^2} =
\begin{cases}
> 0, &\text{if } \alpha\beta >0, \\
< 0, &\text{if } \alpha\beta <0.
\end{cases}
\end{equation*}
By Theorem~\ref{thm:matrix_rearrange}, if $\alpha \beta >0$, then
\begin{align*}
& \frac{1}{\alpha \beta} \sum_{i=1}^n \log\left( \frac{ \alpha \lambda_i^\beta (A) \lambda_{i}^{-\beta} (B) + \beta \lambda_i^{-\alpha} (A) \lambda_{i}^{\alpha} (B) }{\alpha + \beta} \right) \\
& \quad\le D_{\alpha, \beta}(A \| B) \le \frac{1}{\alpha \beta} \sum_{i=1}^n \log\left( \frac{ \alpha \lambda_i^\beta (A) \lambda_{n-i+1}^{-\beta} (B) + \beta \lambda_i^{-\alpha} (A) \lambda_{n-i+1}^{\alpha} (B) }{\alpha + \beta} \right) ;
\end{align*}
and if $\alpha \beta <0$, then 
\begin{align*}
& \frac{1}{\alpha \beta} \sum_{i=1}^n \log\left( \frac{ \alpha \lambda_i^\beta (A) \lambda_{i}^{-\beta} (B) + \beta \lambda_i^{-\alpha} (A) \lambda_{i}^{\alpha} (B) }{\alpha + \beta} \right) \\
& \quad\ge D_{\alpha, \beta}(A \| B) \ge \frac{1}{\alpha \beta} \sum_{i=1}^n \log\left( \frac{ \alpha \lambda_i^\beta (A) \lambda_{n-i+1}^{-\beta} (B) + \beta \lambda_i^{-\alpha} (A) \lambda_{n-i+1}^{\alpha} (B) }{\alpha + \beta} \right) .
\end{align*}
For the case of $\alpha\neq 0$ and $\beta = 0$, we consider the function $f: \mathbb{R}_{++} \ra \mathbb{R} $ defined by 
\begin{equation*}
f(s) = s^{-\alpha} + \alpha \log s   -1 .
\end{equation*}
We have that for any $s > 0$,
\begin{equation*}
s f'(s) = \alpha (1 -s^{-\alpha}) ,
\end{equation*}
which is monotonically increasing regardless of the sign of $\alpha$.
By Theorem~\ref{thm:matrix_rearrange}, 
\begin{align*}
& \frac{1}{\alpha^2} \sum_{i=1}^n \left( \frac{\lambda_{i}^{\alpha} (B)}{\lambda_i^{\alpha} (A)}  - \log \left( \frac{\lambda_{i}^{\alpha} (B)}{\lambda_i^{\alpha} (A)} \right) -1 \right) \\
& \quad\le D_{\alpha, \beta}(A \| B) \le \frac{1}{\alpha^2} \sum_{i=1}^n \left( \frac{\lambda_{n-i+1}^{\alpha} (B)}{\lambda_i^{\alpha} (A)}  - \log\left( \frac{\lambda_{n-i+1}^{\alpha} (B)}{\lambda_i^{\alpha} (A)} \right) -1 \right) .
\end{align*}
For the case of $\beta\neq 0$ and $\alpha = 0$, by using exactly the same argument as that for the case of $\alpha\neq 0$ and $\beta = 0$, we can prove that
\begin{align*}
& \frac{1}{\beta^2} \sum_{i=1}^n \left( \frac{\lambda_{i}^{\beta} (A)}{\lambda_i^{\beta} (B)}  - \log \left( \frac{\lambda_{i}^{\beta} (A)}{\lambda_i^{\beta} (B)} \right) -1 \right) \\
& \quad\le D_{\alpha, \beta}(A \| B) \le \frac{1}{\beta^2} \sum_{i=1}^n \left( \frac{\lambda_i^{\beta} (A)}{\lambda_{n-i+1}^{\beta} (B)}  - \log \left( \frac{\lambda_i^{\beta} (A)}{\lambda_{n-i+1}^{\beta} (B)} \right) -1 \right) .
\end{align*}
For the case of $\alpha = -\beta \neq 0$, we consider the function $f: \mathbb{R}_{++} \ra \mathbb{R} $ defined by 
\begin{equation*}
f(s) = \log\left( \frac{s^{\alpha}}{1+ \alpha \log s} \right).
\end{equation*}
We have that for any $s > 0$,
\begin{equation*}
s f'(s) = \frac{\alpha^2 \log s}{1+\alpha \log s} .
\end{equation*}
Then, for any $s>0$,
\begin{equation*}
\left( s f' (s) \right)' = \frac{\alpha^2 }{s (1 + \alpha \log s)^2} > 0.
\end{equation*}
By Theorem~\ref{thm:matrix_rearrange}, 
\begin{align*}
& \frac{1}{\alpha^2} \sum_{i=1}^n \log \left( \frac{\lambda_i^\alpha (A) \lambda_{i}^{-\alpha} (B) }{1 + \log\left( \lambda_i^\alpha (A) \lambda_{i}^{-\alpha} (B) \right) } \right) \\
& \quad\le D_{\alpha, \beta}(A \| B) \le \frac{1}{\alpha^2} \sum_{i=1}^n \log \left( \frac{\lambda_i^\alpha (A) \lambda_{n-i+1}^{-\alpha} (B) }{1 + \log\left( \lambda_i^\alpha (A) \lambda_{n-i+1}^{-\alpha} (B) \right) } \right) .
\end{align*}
This completes the proof.
\end{proof}

\section{Conclusion}
This paper generalizes the classical Hardy-Littlewood-P\'olya rearrangement inequality to the matrix setting and presents several applications of the resulting matrix rearrangement inequalities. Rearrangement inequalities have long served as fundamental tools in mathematics, economics, statistics, and signal processing. The present work broadens this scope by establishing new inequalities involving the trace of spectral functions of the product of matrices. A natural direction for future research is to investigate whether Theorem~\ref{thm:matrix_rearrange} admits an extension to the setting of Euclidean Jordan algebras, which is a natural generalization of symmetric matrices endowed with spectral structure.

\section*{Acknowledgements} 
The author is grateful to Cheuk Ting Li, Chi-Kwong Li, Wing-Kin Ma, and Viet Anh Nguyen for their valuable comments on the manuscript. This work is supported in part by the Hong Kong Research Grants Council under the GRF project 15305321.

\bibliographystyle{abbrv}
\bibliography{references}

\begin{thebibliography}{10}

\bibitem{amari2016information}
S.-I. Amari.
\newblock {\em Information Geometry and Its Applications}, volume 194.
\newblock Springer, 2016.

\bibitem{augusiak2008beyond}
R.~Augusiak, J.~Stasi{\'n}ska, and P.~Horodecki.
\newblock Beyond the standard entropic inequalities: {S}tronger scalar
  separability criteria and their applications.
\newblock {\em Physical Review A}, 77(1):012333, 2008.

\bibitem{bhatia1997matrix}
R.~Bhatia.
\newblock {\em Matrix Analysis}.
\newblock Springer, 1997.

\bibitem{bhatia2009positive}
R.~Bhatia.
\newblock {\em Positive Definite Matrices}, volume~24.
\newblock Princeton University Press, 2009.

\bibitem{cardoso2010exponentials}
J.~R. Cardoso and F.~S. Leite.
\newblock Exponentials of skew-symmetric matrices and logarithms of orthogonal
  matrices.
\newblock {\em Journal of Computational and Applied Mathematics},
  233(11):2867--2875, 2010.

\bibitem{carlen2006some}
E.~Carlen and E.~H. Lieb.
\newblock Some matrix rearrangement inequalities.
\newblock {\em Annali di Matematica Pura ed Applicata}, 185(5):S315--S324,
  2006.

\bibitem{cichocki2015log}
A.~Cichocki, S.~Cruces, and S.-I. Amari.
\newblock Log-determinant divergences revisited: {A}lpha-{B}eta and {G}amma
  log-det divergences.
\newblock {\em Entropy}, 17(5):2988--3034, 2015.

\bibitem{dorpinghaus2015log}
M.~D\"orpinghaus, N.~Gaffke, L.~A. Imhof, and R.~Mathar.
\newblock A log-det inequality for random matrices.
\newblock {\em SIAM Journal on Matrix Analysis and Applications},
  36(3):1164--1179, 2015.

\bibitem{dryden2009non}
I.~L. Dryden, A.~Koloydenko, and D.~Zhou.
\newblock Non-{E}uclidean statistics for covariance matrices, with applications
  to diffusion tensor imaging.
\newblock {\em The Annals of Applied Statistics}, 3(3):1102--1123, 2009.

\bibitem{gohberg1969introduction}
I.~Gohberg and M.~G. Kre{\u\i}n.
\newblock {\em Introduction to the Theory of Linear Nonselfadjoint Operators}.
\newblock American Mathematical Society, 1969.

\bibitem{hardy1952inequalities}
G.~H. Hardy, J.~E. Littlewood, and G.~P{\'o}lya.
\newblock {\em Inequalities}.
\newblock Cambridge University Press, 1952.

\bibitem{iusem2007angular}
A.~Iusem and A.~Seeger.
\newblock Angular analysis of two classes of non-polyhedral convex cones: {T}he
  point of view of optimization theory.
\newblock {\em Computational \& Applied Mathematics}, 26:191--214, 2007.

\bibitem{james1992estimation}
W.~James and C.~Stein.
\newblock Estimation with quadratic loss.
\newblock In {\em Breakthroughs in Statistics}, pages 443--460. Springer, 1992.

\bibitem{jorswieck2004performance}
E.~A. Jorswieck and H.~Boche.
\newblock Performance analysis of capacity of {MIMO} systems under multiuser
  interference based on worst-case noise behavior.
\newblock {\em EURASIP Journal on Wireless Communications and Networking},
  2004(2):670321, 2004.

\bibitem{lewis2005nonsmooth}
A.~S. Lewis and H.~S. Sendov.
\newblock Nonsmooth analysis of singular values. {Part I}: {T}heory.
\newblock {\em Set-Valued Analysis}, 13(3):213--241, 2005.

\bibitem{london1970rearrangement}
D.~London.
\newblock Rearrangement inequalities involving convex functions.
\newblock {\em Pacific Journal of Mathematics}, 34(3):749--753, 1970.

\bibitem{moakher2006closest}
M.~Moakher and A.~N. Norris.
\newblock The closest elastic tensor of arbitrary symmetry to an elasticity
  tensor of lower symmetry.
\newblock {\em Journal of Elasticity}, 85(3):215--263, 2006.

\bibitem{neff2014logarithmic}
P.~Neff, Y.~Nakatsukasa, and A.~Fischle.
\newblock A logarithmic minimization property of the unitary polar factor in
  the spectral and {F}robenius norms.
\newblock {\em SIAM Journal on Matrix Analysis and Applications},
  35(3):1132--1154, 2014.

\bibitem{nguyen2019calculating}
V.~A. Nguyen, S.~S. Abadeh, M.-C. Yue, D.~Kuhn, and W.~Wiesemann.
\newblock Calculating optimistic likelihoods using (geodesically) convex
  optimization.
\newblock In {\em Advances in Neural Information Processing Systems}, pages
  13920--13931, 2019.

\bibitem{nielsen2013matrix}
F.~Nielsen and R.~Bhatia.
\newblock {\em Matrix Information Geometry}.
\newblock Springer, 2013.

\bibitem{ramirez2013commutation}
H.~Ram{\'i}rez, A.~Seeger, and D.~Sossa.
\newblock Commutation principle for variational problems on {E}uclidean
  {J}ordan algebras.
\newblock {\em SIAM Journal on Optimization}, 23(2):687--694, 2013.

\bibitem{rohde2011estimation}
A.~Rohde and A.~B. Tsybakov.
\newblock Estimation of high-dimensional low-rank matrices.
\newblock {\em The Annals of Statistics}, 39(2):887--930, 2011.

\bibitem{schatten1960norm}
R.~Schatten.
\newblock {\em Norm Ideals of Completely Continuous Operators}.
\newblock Springer-Verlag, 1960.

\bibitem{sra2016positive}
S.~Sra.
\newblock Positive definite matrices and the {S}-divergence.
\newblock {\em Proceedings of the American Mathematical Society},
  144(7):2787--2797, 2016.

\bibitem{sra2016geometric}
S.~Sra and R.~Hosseini.
\newblock Geometric optimization in machine learning.
\newblock In {\em Algorithmic Advances in Riemannian Geometry and
  Applications}, pages 73--91. Springer, 2016.

\bibitem{tomczak1974moduli}
N.~Tomczak-Jaegermann.
\newblock The moduli of smoothness and convexity and the {R}ademacher averages
  of the trace classes $ {S}_p$ $(1\le p<\infty)$.
\newblock {\em Studia Mathematica}, 50(2):163--182, 1974.

\bibitem{tuzel2008pedestrian}
O.~Tuzel, F.~Porikli, and P.~Meer.
\newblock Pedestrian detection via classification on {R}iemannian manifolds.
\newblock {\em IEEE Transactions on Pattern Analysis and Machine Intelligence},
  30(10):1713--1727, 2008.

\bibitem{vince1990rearrangement}
A.~Vince.
\newblock A rearrangement inequality and the permutahedron.
\newblock {\em The American Mathematical Monthly}, 97(4):319--323, 1990.

\bibitem{rychener2024geometric}
M.-C. Yue, Y.~Rychener, D.~Kuhn, and V.~A. Nguyen.
\newblock A geometric unification of distributionally robust covariance
  estimators: {S}hrinking the spectrum by inflating the ambiguity set.
\newblock {\em arXiv preprint arXiv:2405.20124}, 2024.

\bibitem{yue2016perturbation}
M.-C. Yue and A.~M.-C. So.
\newblock A perturbation inequality for concave functions of singular values
  and its applications in low-rank matrix recovery.
\newblock {\em Applied and Computational Harmonic Analysis}, 40(2):396--416,
  2016.

\end{thebibliography}
\end{document}